\newtheorem{claim}{Claim}
\newtheorem{lemma}{Lemma}
\newtheorem{theorem}[lemma]{Theorem}
\theoremstyle{remark}
\newtheorem{conjecture}{Conjecture}
\newtheorem{definition}{Definition}
\newcommand{\old}[1]{{}}
\title{Maximum diameter of $3$- and $4$-colorable graphs}  
\author{\'Eva Czabarka}
\author{Stephen J. Smith}
\author{L\'aszl\'o Sz\'ekely}
\address{\'Eva Czabarka\\Department of Mathematics \\ University of South Carolina \\ Columbia SC 29212 \\ USA}
\email{czabarka@math.sc.edu}
\address{Stephen J. Smith\\Department of Mathematics\\ University of South Carolina \\ Columbia SC 29212 \\ USA}
\email{sjs8@email.sc.edu}
\address{L\'aszl\'o Sz\'ekely\\Department of Mathematics \\ University of South Carolina \\ Columbia SC 29212 \\ USA}
\email{szekely@math.sc.edu}
\subjclass[2010]{Primary: 05C12 Secondary: 05C10}
\keywords{diameter, minimum degree, $k$-colorable,  linear programming duality}              %
\begin{document}
\begin{abstract} P. Erd\H{o}s, J. Pach, R. Pollack, and Z. Tuza
   [\textit{ J. Combin. Theory} {\bf B 47} (1989), 279--285] made conjectures for the maximum diameter of connected graphs without a 
   complete subgraph $K_{k+1}$, which have
   order $n$ and minimum degree $\delta$. Settling a weaker version of a problem, by strengthening the  $K_{k+1}$-free  condition to $k$-colorable,
   we solve the problem for $k=3$ and $k=4$ using a  unified linear programming duality approach. The case $k=4$ is a substantial simplification 
   of the result of  \'E. Czabarka, P. Dankelmann, and L. A. Sz\'ekely
    [\textit{Europ. J. Comb.} {\bf 30} (2009), 1082--1089].
\end{abstract}
\maketitle

\section{Introduction}

We study the maximum diameter of connected graphs in terms of other graph parameters such as order, minimum degree, etc.
Several papers \cite{Amar, EPPT, Gold, Moon} have shown that:

\begin{theorem}\label{th:ori}
For a fixed minimum degree $\delta \geq 2$,  every  connected graph $G$ of order $n$ satisfies 
$\operatorname{diam}(G) \leq \frac{3n}{\delta+1}+O(1)$, as  $n\rightarrow\infty$.
\end{theorem}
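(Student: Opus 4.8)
The plan is to use the classical ``geodesic plus disjoint balls'' counting argument. Fix two vertices $u,w$ with $d(u,w)=d:=\operatorname{diam}(G)$ and let $P\colon u=v_0,v_1,\dots,v_d=w$ be a shortest $u$--$w$ path. The key observation is that whenever $|i-j|\ge 3$, the closed neighborhoods $N[v_i]$ and $N[v_j]$ are disjoint: a common vertex $x\in N[v_i]\cap N[v_j]$ would yield a walk $v_i,x,v_j$ of length at most $2$, contradicting $d(v_i,v_j)=|i-j|\ge 3$, which holds because $P$ is geodesic.

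Next I would select the subsequence of path vertices whose indices are divisible by $3$, namely $v_0,v_3,v_6,\dots,v_{3m}$ with $m=\lfloor d/3\rfloor$. By the observation these $m+1$ vertices have pairwise disjoint closed neighborhoods, and the minimum degree hypothesis gives $|N[v_{3i}]|\ge \delta+1$ for each of them. Hence
\[
n \;\ge\; \sum_{i=0}^{m}\bigl|N[v_{3i}]\bigr| \;\ge\; (m+1)(\delta+1) \;\ge\; \frac{d+1}{3}\,(\delta+1),
\]
using $m=\lfloor d/3\rfloor\ge (d-2)/3$. Rearranging gives $\operatorname{diam}(G)=d\le \frac{3n}{\delta+1}-1$, which is the claimed estimate with an explicit $O(1)$ term; in fact it holds for every $n$, not merely asymptotically, and the hypothesis $\delta\ge 2$ is not actually needed for this direction — it matters only for the matching lower-bound constructions referred to around the theorem.

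There is essentially no hard step: the only point requiring care is the disjointness claim, which rests entirely on $P$ being a shortest path so that distances along it cannot be shortcut. The rest is arithmetic with the floor function, affecting only the additive constant. One could shave the constant slightly by also accounting for the extra vertices contributed near the endpoints $v_0$ and $v_d$, but that refinement is unnecessary for the stated asymptotic form.
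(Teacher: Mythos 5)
Your argument is correct: the disjointness of $N[v_i]$ and $N[v_j]$ for $|i-j|\ge 3$ follows exactly as you say from $P$ being geodesic, and the count $n\ge(\lfloor d/3\rfloor+1)(\delta+1)\ge\frac{d+1}{3}(\delta+1)$ gives $\operatorname{diam}(G)\le\frac{3n}{\delta+1}-1$, which is stronger than the stated $O(1)$ form. Note that the paper does not prove Theorem~\ref{th:ori} at all; it is quoted from the literature \cite{Amar,EPPT,Gold,Moon}, so there is no in-paper proof to compare against. Your ball-packing argument is the standard elementary route and fully suffices for the statement as given (and, with the floor-function bookkeeping you did, it even reproduces the sharp additive constant $-1$ of Erd\H{o}s--Pach--Pollack--Tuza); the cited papers' contribution beyond this is mainly the matching extremal constructions, for which, as you correctly observe, the hypothesis $\delta\ge 2$ is relevant, not for the upper bound itself.
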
 
This upper bound is sharp (even for $\delta$-regular graphs \cite{Smyth}), but the constructions have complete subgraphs, whose order increases with $n$.  
Erd\H{o}s, Pach, Pollack, and Tuza  \cite{EPPT} conjectured that the  upper bound in Theorem~\ref{th:ori} can be improved, if large cliques are excluded:
\begin{conjecture}[\cite{EPPT}]
\label{con:Erdosetal}
Let $r,\delta\geq 2$ be fixed integers and let $G$ be a connected graph of order $n$ and minimum degree $\delta$. 
\begin{enumerate}[label={\upshape (\roman*)}]
\item\label{conpart:even} If $G$ is $K_{2r}$-free and $\delta$ is a multiple of
$(r-1)(3r+2)$ then, as $n\rightarrow \infty$,
\begin{eqnarray*}
\operatorname{diam}(G) &\leq& \frac{2(r-1)(3r+2)}{(2r^2-1)}\cdot \frac{n}{\delta} + O(1)\\
&=&\left(3-\frac{2}{2r-1}-\frac{1}{(2r-1)(2r^2-1)}\right)\frac{n}{\delta}+O(1). 
\end{eqnarray*}
\item\label{conpart:odd}  If $G$ is $K_{2r+1}$-free and $\delta$ is a multiple of $3r-1$, then, as $n\rightarrow \infty$,
\[ \operatorname{diam}(G) \leq \frac{3r-1}{r}\cdot \frac{n}{\delta} + O(1)=\left(3-\frac{2}{2r}\right)\frac{n}{\delta}+O(1). \]
\end{enumerate}
\end{conjecture}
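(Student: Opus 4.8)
\section{A strategy for Conjecture~\ref{con:Erdosetal}}

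The conjecture is still open in general, so what follows is a program — the \emph{linear programming duality} approach announced in the title — rather than a complete proof. The plan is to fix a connected $K_{k+1}$-free graph $G$ of order $n$ and minimum degree $\delta$ (with $k=2r-1$ in part~\ref{conpart:odd} and $k=2r$ in part~\ref{conpart:even}), pick a diametral path $v_0v_1\cdots v_d$ with $d=\operatorname{diam}(G)$, and stratify the vertices into distance levels $L_i=\{x:\operatorname{dist}(v_0,x)=i\}$ for $0\le i\le d$, writing $a_i=|L_i|$. Since edges join only vertices in the same or consecutive levels, and $\deg(v_i)\ge\delta$, we get the basic linear constraints $a_{i-1}+a_i+a_{i+1}\ge\delta+1$ for $1\le i\le d-1$; choosing the path vertices $v_1,v_4,v_7,\dots$, whose closed neighbourhoods lie in the disjoint level‑triples $\{L_0,L_1,L_2\},\{L_3,L_4,L_5\},\dots$, already yields $d\le\frac{3n}{\delta+1}+O(1)$, i.e.\ Theorem~\ref{th:ori}. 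The entire point is to convert the hypothesis ``$G$ is $K_{k+1}$-free'' into \emph{additional, strictly stronger} linear constraints on the $a_i$ that drive the constant down to the conjectured value.

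The key step is a family of \emph{local lemmas}: fix a window $W$ of $w$ consecutive levels — the target constants suggest $w=3r-1$ in the odd case, and $w$ comparable to $2(r-1)(3r+2)$ in the even case — and show that the subgraph induced on $W$, which is $K_{k+1}$-free and contains $r$ path vertices of degree $\ge\delta$ whose closed neighbourhoods sit inside $W$ (or a window one larger), forces $\sum_{i\in W}a_i$ to be at least roughly $r\delta$ in the odd case, and the corresponding fraction in the even case. The mechanism is that in a $K_{k+1}$-free graph the neighbourhood of any vertex is $K_k$-free, so it cannot be collapsed onto too few levels without creating a forbidden clique; equivalently, the neighbourhoods of the path vertices $v_i,v_{i+2},\dots$ (which pairwise meet only in one intermediate level) cannot all have large pairwise intersections at once. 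I would encode each such local configuration as a small linear program in the variables $a_i$ together with auxiliary variables $b_{i,j}=|L_j\cap N(v_i)|$, impose the clique‑free restrictions as Tur\'an‑ or Ramsey‑type inequalities, and read off the optimum by exhibiting an explicit feasible dual solution; chaining the window lemma along the path with step $w$ (so the windows are disjoint) and summing gives $n\ge\frac{d}{w}\cdot(\text{window bound})-O(1)$, hence the stated diameter bound. The divisibility hypotheses on $\delta$ are exactly what makes the window bound an integer and what lets the extremal constructions — long strings of blown‑up gadgets — match it, giving sharpness.

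\textbf{Main obstacle.} The hard part is the local lemma, and it is precisely why the conjecture remains open for general $r$ while the present paper settles only the $k$-colourable relaxation for $k=3,4$. When ``$K_{k+1}$-free'' is strengthened to ``$k$-colourable'', every level and every neighbourhood splits into $k$ independent sets whose interaction across consecutive levels is transparent, and the window LP can be solved uniformly in $r$. Under the genuine $K_{k+1}$-free hypothesis the available structural input is far weaker — a $K_{k+1}$-free graph need not be $k$-colourable, and the general tools (Ramsey lower bounds on independence number, Tur\'an edge counts) are only asymptotic in the size of the local subgraph and do not by themselves pin down the sharp window constant. So I expect the real work to be combinatorial rather than LP‑theoretic: determining, for each $r$, the exact extremal behaviour of $K_{2r}$- and $K_{2r+1}$-free graphs spread over a bounded number of distance levels — and it is plausible that for large $r$ the conjectured constant is not even the truth, which would be consistent with the statement appearing only as a conjecture.
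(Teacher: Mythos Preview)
The statement you address is a \emph{conjecture}, and the paper does not attempt to prove it. More to the point, the paper explicitly reports that part~\ref{conpart:even} is \emph{false}: Czabarka, Singgih and Sz\'ekely \cite{counterexpaper} construct, for every $r\ge 2$ and every $\delta>2(r-1)(3r+2)(2r-3)$, a $(2r-1)$-colourable (hence $K_{2r}$-free) graph whose diameter exceeds the conjectured bound. So any program aimed at proving part~\ref{conpart:even} in full generality is doomed; your closing remark that ``for large $r$ the conjectured constant is not even the truth'' is already a theorem, and it kicks in at $r=2$. The local window lemma you sketch for the even case therefore cannot hold.

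A minor slip: your labelling of $k$ is swapped. Part~\ref{conpart:even} concerns $K_{2r}$-free graphs, so $k+1=2r$ and $k=2r-1$; part~\ref{conpart:odd} concerns $K_{2r+1}$-free graphs, so $k=2r$.

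On the method: stratifying into distance levels, setting up a linear program, and certifying the bound with an explicit dual is indeed the spirit of what the paper does --- but only for the \emph{$k$-colourable} relaxation, and only for $k\in\{3,4\}$. The crucial difference is that under $k$-colourability one passes to a \emph{clump graph} (one vertex per colour class per level) whose structure is rigidly constrained (Theorem~\ref{th:canonical}, Claim~\ref{def:strongcan}, Lemma~\ref{lm:basic}); the LP variables are weights on the clump-graph vertices, not the level sizes $a_i$, and the dual weights are assigned segment-by-segment after partitioning the layers into three structural types (Lemma~\ref{lm:main}). You correctly identify that the genuine $K_{k+1}$-free hypothesis offers no such decomposition, which is exactly why part~\ref{conpart:odd} remains open for $r\ge 2$ under that hypothesis, and why the paper restricts to the colourable setting.
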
 
Furthermore, they created examples showing that the above conjecture, if true, is sharp, and showed
part (ii) of the conjecture  for $r=1$.

Czabarka, Dankelmann and Sz\'ekely \cite{dankelmanos} arrived at the
conclusion of Conjecture \ref{con:Erdosetal}~\ref{conpart:odd} for $r=2$ under a stronger hypothesis:
\begin{theorem} 
\label{th:CDS}
For every connected $4$-colorable graph $G$ of order $n$ and
minimum degree $\delta\ge 1$,
\( \operatorname{diam}(G) \leq \frac{5n}{2\delta}-1. \)
\end{theorem}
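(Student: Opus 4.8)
The plan is to fix a diametral path $P = v_0 v_1 \cdots v_d$ with $d = \operatorname{diam}(G)$ and to analyse the breadth‑first‑search partition $V(G) = L_0 \cup L_1 \cup \cdots \cup L_d$ from the endpoint $v_0$, where $L_i = \{u : \operatorname{dist}(v_0,u) = i\}$. Since $P$ is a shortest path, $\operatorname{dist}(v_0,v_d) = d = \operatorname{diam}(G)$, so the eccentricity of $v_0$ equals $d$; hence every level is nonempty, $L_0 = \{v_0\}$, and $v_i \in L_i$. Writing $a_i = |L_i|$ we have $a_0 = 1$, $a_i \ge 1$, and $\sum_{i=0}^{d} a_i = n$, so the theorem is equivalent to the lower bound $n \ge \tfrac{2\delta}{5}(d+1)$, which is exactly what forces $d + 1 \le \tfrac{5n}{2\delta}$.

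The engine will be a linear program whose variables record, for each level $i$ and each colour class $c \in \{1,2,3,4\}$ of a fixed proper $4$‑colouring, the number $a_{i,c}$ of vertices of colour $c$ in $L_i$, so $a_i = \sum_c a_{i,c}$. Two families of local inequalities feed it. First, because $N(u) \subseteq L_{i-1} \cup L_i \cup L_{i+1}$ for every $u \in L_i$, the minimum‑degree hypothesis gives $a_{i-1} + a_i + a_{i+1} \ge \delta + 1$ for $0 < i < d$, with the obvious one‑sided versions at the ends; these alone already reprove the $\tfrac{3n}{\delta}$ estimate of Theorem~\ref{th:ori}. Second — and this is where $4$‑colourability enters — if $u \in L_i$ has colour $c$ then no neighbour of $u$ has colour $c$, so $\delta \le \deg(u) \le \sum_{c' \ne c}\bigl(a_{i-1,c'} + a_{i,c'} + a_{i+1,c'}\bigr)$ whenever $a_{i,c} \ge 1$; equivalently $(a_{i-1} + a_i + a_{i+1}) - (a_{i-1,c} + a_{i,c} + a_{i+1,c}) \ge \delta$. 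Applying this to a largest colour class of $L_i$ (of size $\ge a_i/4$) already yields the coarse consequence $\tfrac34 a_i + a_{i-1} + a_{i+1} \ge \delta$, but the per‑colour form is what will be needed to reach the sharp constant. If required I would also add the constraint coming from the fact that the common neighbourhood of two consecutive path vertices $v_i, v_{i+1}$ is $2$‑colourable, hence bipartite, which limits how much the neighbourhoods of consecutive path vertices can overlap.

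With these constraints in place, minimizing $\sum_{i,c} a_{i,c}$ over all feasible profiles on $d+1$ levels is a linear program, and the theorem reduces to showing its optimum is at least $\tfrac{2\delta}{5}(d+1)$. I would prove this by exhibiting an explicit dual‑feasible solution — a choice of nonnegative multipliers for the degree and colour inequalities, periodic with period $5$ in the interior and specially tuned in the $O(1)$ boundary levels, using $a_0 = 1$ — whose weighted combination telescopes to exactly $\tfrac{2\delta}{5}(d+1)$, with no deficit, so that the additive constant $-1$ is tight. The period $5$ matches the extremal graphs of Erd\H{o}s, Pach, Pollack and Tuza, which show the estimate is best possible and pin down the correct period; running the same duality scheme with three colour classes instead of four yields the companion bound for $3$‑colourable graphs, giving the promised unified treatment, and it is this packaging that makes the $k=4$ case far shorter than in \cite{dankelmanos}.

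The main obstacle I anticipate is the second step: isolating a set of local inequalities that is simultaneously genuinely implied by minimum degree plus $4$‑colourability and strong enough that the LP optimum is exactly $\tfrac{2\delta}{5}$ per level, rather than a weaker ratio (such as $\tfrac{4}{11}\delta$) that the crude constraints alone permit. Concretely, one must rule out near‑constant level profiles in which each $L_i$ induces a balanced complete‑$4$‑partite‑like graph; the obstruction is a colour‑propagation phenomenon — a vertex forced (in order to reach degree $\delta$) to be adjacent to an entire neighbouring level forbids its own colour there, shrinking that level's palette and thereby forcing it to be larger, which can cascade — and capturing this cleanly by linear constraints on the $a_{i,c}$, rather than through a long case analysis, is the crux. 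Once the constraint set is right, producing the dual certificate and the end corrections is bookkeeping.
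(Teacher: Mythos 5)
Your outline correctly identifies the general framework the paper also uses (BFS layers from a vertex of maximum eccentricity, colour information per layer, weak LP duality with a dual certificate whose average value per layer is $\frac{2}{5}\delta$, i.e.\ $\frac{k}{3k-2}\delta$ for $k=4$), but as written it has a genuine gap, and you name it yourself: you never fix a constraint set that is provably strong enough, and you never exhibit the dual certificate. The inequalities you do write down are valid but unverified as sufficient: the coarse form $\frac34 a_i+a_{i-1}+a_{i+1}\ge\delta$ only yields roughly $\frac{4}{11}\delta$ per layer, and whether the per-colour window inequalities $(a_{i-1}+a_i+a_{i+1})-(a_{i-1,c}+a_i{}_{,c}+a_{i+1,c})\ge\delta$ (for colours present in $L_i$) force $\frac{2}{5}\delta$ per layer is exactly the ``colour propagation'' problem you defer; no argument is given that the optimum of your LP over colour-count profiles is at least $\frac{2\delta}{5}(d+1)$, and that is the entire mathematical content of the theorem. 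Moreover, your proposed certificate ``periodic with period $5$ in the interior'' cannot work verbatim: the extremal and near-extremal layer structures are not periodic (in the paper's language, Type 1, Type 2 and Type 3 segments of varying lengths can occur in arbitrary order), so the dual multipliers must adapt to the local structure rather than repeat with a fixed period.

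It is also worth seeing how differently the paper resolves the step you flag as the crux. Rather than augmenting a count-LP with cleverer linear inequalities in the $a_{i,c}$, it first applies a structural reduction (saturation plus Theorem~\ref{th:canonical} and Lemma~\ref{lm:cD1}, imported from \cite{kcolorable}) replacing $G$ by a graph whose clump graph is \emph{strongly canonical}; in such a graph adjacency between differently coloured vertices in the same or consecutive layers is forced, and the palettes of consecutive layers obey strong restrictions. The dual certificate then lives on the clump graph: a weighting $u$ of clumps with $\sum_{y\in N(x)}u(y)\le 1$ for every clump $x$ (condition~(\ref{dualcond})), built by partitioning the layers into Type 1, 2, 3 segments (Lemma~\ref{lm:main}) and assigning explicit weights depending on $|L_i|$ and the sets $S_i$ of clumps adjacent to all of $L_{i-1}\cup L_i$; this $S_i$-information is adjacency data that colour counts alone do not record. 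Verifying the window condition is a short but genuine case analysis, not bookkeeping, and the paper notes it already fails for $k\ge 5$ in this form --- evidence that the sharp constant does not fall out of generic local counting. To turn your proposal into a proof you would need either to supply and verify such a certificate for a constraint system you prove sufficient, or to carry out the structural reduction that the paper relies on.
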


Czabarka, Singgih and Sz\'ekely  \cite{counterexpaper} gave an infinite family of $(2r-1)$-colorable (hence $K_{2r}$-free) graphs with diameter
$\frac{(6r-5)(n-2)}{(2r-1)\delta+2r-3}-1$, providing a counterexample for  Conjecture \ref{con:Erdosetal}~\ref{conpart:even} for
every  $r\geq 2$ and $\delta> 2 (r-1)(3r+2)(2r-3)$.  
The question whether Conjecture \ref{con:Erdosetal}~\ref{conpart:even} holds in the 
range $(r-1)(3r+2)\le\delta\le 2(r-1)(3r+2)(2r-3)$ remains open.
The counterexample led Czabarka {\sl et al.} \cite{counterexpaper} 
 to the modified conjecture below, which no longer requires cases for the parity of the order of the excluded complete subgraphs:
 \begin{conjecture}[\cite{counterexpaper}] \label{con:7o3}
 For every $k\ge 3$ and $\delta\ge \lceil\frac{3k}{2}\rceil-1$, 
if $G$ is a $K_{k+1}$-free (under a stronger hypothesis, $k$-colorable) connected graph of order $n$ and  minimum degree at least $\delta$,  
  $\operatorname{diam}(G)\leq \left(3-\frac{2}{k}\right)\frac{n}{\delta}+O(1)$. 
  \end{conjecture}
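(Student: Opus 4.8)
The plan is to establish the $k$-colorable instances of Conjecture~\ref{con:7o3} for $k=3$ and $k=4$ by a single scheme: encode the extremal structure along a diametral path as a linear program in the layer sizes, and certify the bound by producing an explicit feasible solution of its dual. Concretely, let $G$ be a connected $k$-colorable graph with $\operatorname{diam}(G)=d$; fix a geodesic $v_0v_1\cdots v_d$, run breadth-first search from $v_0$, write $L_j$ for the $j$-th layer and $a_j=|L_j|$, so $v_j\in L_j$, $\sum_{j=0}^d a_j=n$, and (the path being shortest) $N[v_j]\subseteq L_{j-1}\cup L_j\cup L_{j+1}$. This already yields the \emph{basic inequalities} $a_{j-1}+a_j+a_{j+1}\ge\delta+1$ (with the evident one-sided versions at the two ends), which by themselves only recover Theorem~\ref{th:ori}. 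The whole game is therefore to extract a stronger family of linear inequalities in the $a_j$ from $k$-colorability.

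For that I would use the observation that in a proper $k$-coloring every neighborhood $N(v)$ uses at most $k-1$ colors, and the common neighborhood $N(u)\cap N(v)$ of every edge $uv$ uses at most $k-2$; for $k=3$ this makes $N(u)\cap N(v)$ an independent set (and $N(v)$ bipartite), and for $k=4$ it makes $N(u)\cap N(v)$ bipartite. Applying this to the path edges $v_jv_{j+1}$ and, crucially, also applying the minimum-degree hypothesis to the vertices of the set $C_j:=N(v_j)\cap N(v_{j+1})$ — which is confined to $L_j\cup L_{j+1}$, so each of its vertices has its $\ge\delta$ neighbors inside the four-layer window $L_{j-1}\cup\cdots\cup L_{j+2}$, yet is forbidden to meet its own (independent) color class, a fixed fraction of $C_j$ — one obtains, after optimizing over $|C_j|$, inequalities relating three to five consecutive $a_j$'s that are strictly stronger than the basic ones. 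Pinning down a family of such inequalities that is simultaneously provable for \emph{all} $k$-colorable graphs and strong enough for the LP below to reach the constant $3-\tfrac2k$ — rather than something weaker such as $\tfrac{8}{3}$ for $k=3$, which is all the most naive width-four inequality plus the basic ones will give — is the main obstacle, and the point at which the $k=3$ and $k=4$ arguments, although pushed through the same machine, each need their own combinatorial input; the inequalities must be tight, e.g.\ in the $k=4$ case witnessed by the $4$-colorable ``blown-up path'' whose layers are complete $4$-partite graphs of size $\tfrac{2\delta}{5}$ with consecutive layers completely joined.

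With the inequalities fixed, hold $d$ and minimize $\sum_{j=0}^d a_j$ over $a_j\ge 1$ subject to the basic and the colorability inequalities. The constraint matrix is banded and, away from the two ends, translation-invariant, so it suffices to exhibit a nonnegative weighting of the constraints — a feasible point of the dual LP — that is periodic in the interior, places total weight at most $1$ on each coordinate $a_j$, and has objective value $\frac{\delta}{3-2/k}\,d-O(\delta)$; weak duality then gives $n\ge\frac{\delta}{3-2/k}\,d-O(\delta)$, i.e.\ $\operatorname{diam}(G)\le\bigl(3-\tfrac2k\bigr)\frac n\delta+O(1)$. For $k=3$ and $k=4$ this reduces to writing down a small-period dual vector and checking feasibility by hand, with the irregular layers near $v_0$ and $v_d$ and the non-divisibility of $\delta$ by the relevant modulus absorbed into the $O(1)$. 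Carrying the argument out for $k=4$ reproduces Theorem~\ref{th:CDS} with a considerably shorter proof, and the $k=3$ case is new.
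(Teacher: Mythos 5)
Your proposal is an outline of a strategy, not a proof: the family of ``colorability inequalities'' in the layer sizes $a_j$ --- which you yourself flag as ``the main obstacle'' --- is never written down, let alone proved, and the periodic dual certificate is never exhibited or checked. Weak duality transfers a bound only after both of these exist, so the entire combinatorial content of the theorem is still missing from the sketch. There are also concrete reasons to doubt the specific route you indicate. First, an LP whose variables are only the layer cardinalities $a_j$ discards exactly the information that matters here, namely how each layer splits into color classes; the paper never works with layer sizes, but passes to a clump graph whose vertices are the color classes inside layers, invokes a structural reduction (Theorem~\ref{th:canonical}, sharpened to strongly canonical clump graphs via Lemma~\ref{lm:cD1}) to control that graph, and then applies the duality tool (Theorem~\ref{th:tool}) in which colorability enters \emph{structurally} through saturation (differently colored clumps in the same or adjacent layers are adjacent), while the only linear constraint is that the total weight on each clump's neighborhood is at most $1$. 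The dual certificate is then given explicitly, segment by segment, after partitioning the layers into three types (Lemma~\ref{lm:main}), and verified by a finite case analysis that genuinely uses $k\in\{3,4\}$ (Lemma~\ref{lm:basic}~\ref{part:last}); this is precisely the work your plan defers. Second, two of your supporting claims are shaky: for $k=4$ the set $C_j=N(v_j)\cap N(v_{j+1})$ is only bipartite, so a vertex of $C_j$ is forbidden from its own side, which need not be ``a fixed fraction of $C_j$''; and your proposed tight example for $k=4$ (consecutive layers completely joined, each layer a complete $4$-partite graph of size $\tfrac{2\delta}{5}$) is not $4$-colorable as described, since a complete join forces consecutive layers to use disjoint color sets. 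So while your general philosophy --- certify the extremal LP by an explicit dual solution --- coincides with the paper's, the proposal as it stands has a genuine gap at its central step, and the evidence suggests the gap cannot be closed at the level of layer sizes alone.
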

 Czabarka, Singgih and Sz\'ekely  \cite{kcolorable}  showed that the extremal graphs for the diameter maximization problem of Conjecture~\ref{con:7o3}
include graphs blown up from some very specific structures, called {\sl canonical clump graphs}. Furthermore, \cite{kcolorable}  showed using the
weak duality theorem of  linear programming that providing a sufficiently good solution for a dual problem on canonical clump graphs gives an upper bound for the 
diameter of graphs blown up from canonical clump graphs (see Theorem~\ref{th:tool}), and hence an upper bound for the diameter maximization problem of Conjecture~\ref{con:7o3}.     
Using this method, they proved:
\begin{theorem} [\cite{{kcolorable}}] \label{th:upperbound}        Assume $k\ge 3$. If $G$ is a connected $k$-colorable graph of minimum degree at least $\delta$, then
\[ \operatorname{diam}(G)\leq \frac{3k-4}{k-1}\cdot\frac{n}{\delta}-1=\left(3-\frac{1}{k-1}\right)\frac{n}{\delta}-1\].
\end{theorem}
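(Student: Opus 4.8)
The plan is to stay inside the linear-programming duality framework already set up in the paper. By the structural reduction of \cite{kcolorable}, the diameter-maximization problem for connected $k$-colorable graphs of order $n$ and minimum degree at least $\delta$ is controlled by graphs blown up from \emph{canonical clump graphs}; and by Theorem~\ref{th:tool} it suffices, for each canonical clump graph, to exhibit a feasible solution of the associated dual linear program whose objective value is at most $\tfrac{3k-4}{k-1}$. Weak duality then yields $\operatorname{diam}(G)\le \tfrac{3k-4}{k-1}\cdot\tfrac n\delta-1$ for every such blow-up, and hence for all connected $k$-colorable $G$. So the whole argument reduces to writing down one explicit dual vector on a finite list of gadgets and certifying its feasibility.

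First I would fix coordinates on the gadgets. A canonical clump graph is laid out along a geodesic $v_0v_1\cdots v_d$ realizing the diameter, with the vertices split into BFS-type layers (\emph{clumps}) $L_0,\dots,L_d$ of sizes $n_i$; after the reduction, the only remaining freedom is in how consecutive \emph{small} clumps may be glued, constrained on one side by the minimum-degree inequalities $n_{i-1}+n_i+n_{i+1}\ge\delta+1$ (for interior $i$) and on the other by $k$-colorability. In the dual LP there is a nonnegative weight for each such inequality; the feasibility conditions say that the total weight seen by any clump through the inequalities containing it is at most $1$; and the objective is, after the normalization by $n/\delta$, the geodesic length $d$. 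Putting weight $\tfrac13$ on every triple inequality reproduces the classical bound $\operatorname{diam}(G)\le 3n/\delta+O(1)$ of Theorem~\ref{th:ori}; improving the leading constant from $3$ to $3-\tfrac1{k-1}$ requires the extra inequalities forced by $k$-colorability.

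The combinatorial content behind those extra inequalities --- the thing I would check by hand before touching the LP --- is a clique-forcing phenomenon: if two consecutive triples $n_{i-1}+n_i+n_{i+1}$ and $n_i+n_{i+1}+n_{i+2}$ are both tight, then every vertex of $L_i$ is adjacent to all of $(L_{i-1}\cup L_i\cup L_{i+1})\setminus\{x\}$ and symmetrically for $L_{i+1}$, so $G[L_i\cup L_{i+1}]$ is a clique; since $G$ has no $K_{k+1}$ this gives $n_i+n_{i+1}\le k$, hence $n_{i-1}=n_{i+2}\ge\delta+1-k$, an unavoidably large clump. Iterating this along the geodesic shows that consecutive clumps cannot all be simultaneously tight for too long, and quantifies how much ``room'' $k$-colorability creates. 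I would turn that into a family of valid linear inequalities for the primal and then guess a dual vector that is periodic with period $k-1$ along the geodesic (a pattern of weights on the triple inequalities together with suitable weights on the clique-forcing inequalities), tuned exactly so that after normalization the bound reads $3-\tfrac1{k-1}$, with the boundary clumps near $v_0$ and $v_d$ receiving their own reduced weights; these boundary corrections are what upgrade an $O(1)$ error term to the clean additive $-1$.

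The step I expect to be the real obstacle is the feasibility verification across the entire list of canonical clump graphs: these gadgets occur in several structural types according to which gluings of small clumps survive the reduction, and each type generates its own family of dual constraints, so one must check the proposed periodic weights against all of them and confirm that the \emph{tightest} constraints evaluate to $3-\tfrac1{k-1}$ and not to anything larger. A second, more clerical difficulty is handling the end-of-geodesic clumps and the floor/ceiling effects carefully enough that the constant comes out as exactly $-1$; Theorem~\ref{th:tool} is built to absorb this bookkeeping, but it still has to be carried out once for the particular weighting chosen here.
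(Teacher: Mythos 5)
Your proposal correctly identifies the general framework (reduce to canonical clump graphs and apply the weak-duality tool of Theorem~\ref{th:tool}), which is indeed how this bound is obtained in \cite{kcolorable}. But as written it is a plan, not a proof: the entire content of the theorem lies in exhibiting an explicit feasible dual weighting whose value grows like $\frac{k-1}{3k-4}$ per layer (so that $\tilde u=\frac{k-1}{3k-4}$ in Theorem~\ref{th:tool}) and in verifying the feasibility constraint~(\ref{dualcond}) on every canonical clump graph. You never produce such a weighting; you say you ``would guess a dual vector that is periodic with period $k-1$'' and you explicitly defer the feasibility check, which you yourself call ``the real obstacle.'' Note also a sign confusion: in the framework of Theorem~\ref{th:tool} one needs a feasible solution of \emph{large} objective value (at least $\tilde u\delta D+C$), not one ``whose objective value is at most $\frac{3k-4}{k-1}$.''

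There is also a structural mismatch in the LP you set up. Theorem~\ref{th:tool} concerns weights $u$ on the \emph{vertices of the clump graph} $H$ with the neighborhood constraints~(\ref{dualcond}); you instead describe nonnegative multipliers on the minimum-degree triple inequalities $n_{i-1}+n_i+n_{i+1}\ge\delta+1$, augmented by ``clique-forcing inequalities.'' The clique-forcing observation (if two consecutive triples are tight, then $L_i\cup L_{i+1}$ is a clique, so $n_i+n_{i+1}\le k$) is correct as a combinatorial statement, but it is \emph{conditional on tightness} and therefore is not a valid linear inequality of the primal that can simply be added and dualized; turning it into usable LP constraints is exactly the work the clump-graph machinery of \cite{kcolorable} does (via the structural properties in Theorem~\ref{th:canonical}), and your sketch does not do it. So the missing ingredients are concrete: a well-defined LP consistent with Theorem~\ref{th:tool}, an explicit weighting of the clumps (analogous to, but coarser than, the $\frac{k}{3k-2}$-average weighting used in the present paper for $k\in\{3,4\}$), and the case analysis certifying~(\ref{dualcond}) together with the boundary bookkeeping that yields the additive constant $-1$.
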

\noindent    Czabarka, Singgih and Sz\'ekely  \cite{kcolorable}  also made a slight improvement on Theorem~\ref{th:upperbound}   for $3$-colorable graphs, but with a different argument.

 In this paper we give a common short proof of the Conjecture~\ref{con:7o3} (under the stronger hypothesis) for $k=3$ and $4$  (the latter  being  Theorem~\ref{th:CDS}  from  Dankelmann {\sl et al.} \cite{dankelmanos})  using the approach  above: 
 \begin{theorem}\label{th:main} Assume $k=3$ or $4$. If $G$ is a connected $k$-colorable graph of order $n$, and  of minimum degree at least $\delta\ge 1$, then $\operatorname{diam}(G)\le\left(3-\frac{2}{k}\right)\frac{n}{\delta}-1.$
\end{theorem}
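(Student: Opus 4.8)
The plan is to reuse the reduction of Czabarka, Singgih and Sz\'ekely \cite{kcolorable}. By Theorem~\ref{th:tool}, to prove the bound for every connected $k$-colorable graph it suffices to exhibit, for each \emph{canonical clump graph} belonging to $k$, a feasible solution of value at least $\tfrac{k}{3k-2}$ of the associated dual linear program; since $\tfrac{k}{3k-2}=\tfrac37$ for $k=3$ and $\tfrac{k}{3k-2}=\tfrac25$ for $k=4$, this yields the coefficients $3-\tfrac2k=\tfrac73$ and $\tfrac52$. So the first task is to set up this primal--dual pair. Recall that a canonical clump graph is the finite \emph{skeleton} obtained from a diametral path of an extremal graph and its BFS levels $L_0,\dots,L_d$ by grouping consecutive levels into clumps $C_1,\dots,C_m$, together with a record of which of the $k$ color classes meets which clump. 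Under a blow-up the order becomes $\sum_j \ell_j z_j$, where $\ell_j$ is the number of levels in $C_j$ and $z_j\ge 0$ is the blow-up weight; the diameter is essentially $\sum_j \ell_j$; and the minimum-degree hypothesis becomes the linear inequalities that, for every clump $C_j$ and every color $c$ meeting $C_j$, the total weight of the vertices of $C_{j-1}\cup C_j\cup C_{j+1}$ that are \emph{not} colored $c$ is at least $\delta$ --- simply because a $c$-colored vertex of $C_j$ has all of its at least $\delta$ neighbors in those three clumps and none of them is colored $c$. The primal linear program minimizes $\sum_j \ell_j z_j$ over $z\ge 0$ subject to these inequalities and to the two boundary inequalities from the ends of the path, and a dual certificate of value at least $\tfrac{k}{3k-2}$ gives $n\ge\tfrac{k}{3k-2}\,\delta\,(\operatorname{diam}(G)+1)$, which is the assertion.

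The second task is to enumerate the canonical clump graphs for $k=3$ and for $k=4$. This is exactly where $k$-colorability is stronger than $K_{k+1}$-freeness and makes things tractable: the record of which colors meet which clumps is severely constrained --- two clumps joined by an edge cannot share a color class, a clump meeting all $k$ colors is forced to be heavy, and a clump meeting few colors forces its neighbors to be heavy --- so only a short list of clump types and admissible gluings survives, and the list is finite once one observes that long stretches must be periodic. For $k=3$ this list is short; for $k=4$ it is longer, and organizing it is what replaces the lengthy computation of Czabarka, Dankelmann and Sz\'ekely \cite{dankelmanos}.

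The third task, for each canonical clump graph on the list, is to write down explicit nonnegative dual multipliers $\mu_{j,c}$ for the degree inequalities (and multipliers for the two boundary inequalities), periodic in the interior and adjusted near the ends, and then to check the two dual conditions: (i) for every clump $C_j$, the total multiplier mass of the inequalities in which $z_j$ appears is at most $\ell_j$; and (ii) the dual objective $\delta\sum_{j,c}\mu_{j,c}$ plus the boundary contribution is at least $\tfrac{k}{3k-2}\sum_j\ell_j$. Each is a finite linear verification.

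The main obstacle will be twofold. First, the two ends of the clump graph: the endpoints of a diametral path are single vertices, which force their neighboring levels to be large, so the boundary clumps are atypical, and choosing the boundary multipliers so that no slack is wasted there is precisely what turns a weak $+O(1)$ into the exact $-1$ of the statement. Second, completeness of the enumeration for $k=4$, together with the requirement that a single interior multiplier pattern be feasible for every clump type that can occur --- if some clump graph on the list needs a genuinely different pattern, the uniformity of the argument breaks. I expect $k=3$ to go through quickly and $k=4$ to absorb most of the verification, but with far less case analysis than in \cite{dankelmanos}.
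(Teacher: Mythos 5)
Your overall strategy is the one the paper uses --- invoke Theorem~\ref{th:tool} and exhibit, for the relevant clump graphs, a dual feasible weighting averaging $\tfrac{k}{3k-2}$ per layer, so that $n\ge\tfrac{k}{3k-2}\delta(\operatorname{diam}(G)+1)$ --- but as written the proposal has a genuine gap: the entire mathematical content is deferred to steps that are neither carried out nor correctly set up. First, your description of the object is off: in this framework a clump is a color class \emph{within a single distance layer} $L_i$ (so the clump graph records, layer by layer, which colors appear and how clumps are joined, with saturation making differently colored clumps in the same or consecutive layers adjacent), not a grouping of consecutive BFS levels into blocks $C_1,\dots,C_m$; the dual variables are weights on these layer--color clumps, constrained by $\sum_{y\in N(x)}u(y)\le 1$ at every clump $x$. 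Second, and more seriously, your plan to ``enumerate the canonical clump graphs for $k=3$ and $k=4$'' into a finite list of types whose verification is ``a finite linear verification'' is unsubstantiated: the family is infinite and the interior is not simply periodic --- long stretches of small layers can have layer sizes varying up to $k$ with varying sets $S_i$ of clumps adjacent to everything in the neighboring layers --- so the finiteness claim, which is the crux of your argument, is exactly what needs proof and is not true in the naive form you state. You also never write down the multipliers, so conditions (i)--(ii) of your third task are not checked for anything.

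What the paper does instead, and what is missing from your proposal, is a local structural analysis rather than a global enumeration: it first reduces to \emph{strongly canonical} clump graphs with $c(0)=c(D)=1$ (Lemma~\ref{lm:cD1}, needed to get the exact $-1$ you are worried about at the ends), then classifies layers as big or small according to $|S_i|>\tfrac k2$, proves that for $k\in\{3,4\}$ a big layer forces $|S_i|=k-1$ and singleton neighboring layers (Lemma~\ref{lm:basic}), partitions the layers into segments of three types (Lemma~\ref{lm:main}), and assigns explicit weights per segment type so that each segment averages $\tfrac{k}{3k-2}$ per layer. Feasibility at every clump then requires a real argument, notably for a clump $v\notin S_i$ in a large Type~3 layer, where one pairs $v$ with a non-neighbor $w$ in an adjacent layer and uses $\max(|L_i|-|S_i|,|L_j|-|S_j|)\le k-|S_i|-|S_j|$ to show $u(v)+u(w)\ge\tfrac{2}{3k-2}$. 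None of this --- the reduction at the ends, the big/small dichotomy that is special to $k\in\{3,4\}$, the explicit weighting, or the feasibility verification --- can be recovered from the proposal as it stands, so it is a plan sharing the paper's starting point rather than a proof.
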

 
The main tool of the proof is still the use of canonical clump graphs, however, we focus on an even smaller class, {\sl strongly canonical clump graphs}, of which blown up copies
are still present among the extremal graphs for the diameter maximization problem of Conjecture~\ref{con:7o3}, as shown in Section~\ref{sec:clump}.
We partition the strongly canonical graph into segments of three types. Weighting the vertices such that the total weight of the neighbors of any vertex is at most $1$ 
and the average weight of a layer in each segment is $\frac{k}{3k-2}$ finishes the proof. 
When $k\in\{3,4\}$, Type 1 and Type 2 segments (defined in Section \ref{sec:definitions}) have a very limited structure, as shown in Lemma~\ref{lm:main}.
For $k\ge 5$ we have examples of segments that cannot be weighted according to this scheme, so new ideas are needed.

\section{Clump Graphs}\label{sec:clump}

Given a $k$-colorable connected graph $G$ of order $n$ and minimum degree at least $\delta$, choose a vertex $x$  whose  eccentricity
is $\operatorname{diam}(G)$.
Take  a \emph{fixed} good $k$-coloring of $G$. Let \emph{layer} $L_i$ denote the set of vertices at distance $i$ from $x$, and a \emph{clump} in $L_i$ be the set of vertices in $L_i$ that have the same color. 
The number of layers is  $\operatorname{diam}(G)+1$. We call a graph \emph{layered}, if such a vertex $x$ and the distance layers $L_0=\{x\},L_1,\ldots, L_D$ are given.
Let $c(i) \in \{1,2,\ldots, k\}$ denote the number of colors used in layer $L_i$ by our fixed coloration. We can assume without loss of generality that
 any two vertices in layer $L_i$   in $G$, which are differently colored, are joined by an edge in $G$, and also that two vertices in consecutive layers, which are differently colored, are also joined by an edge in $G$. We call this assumption \emph{saturation}  with respect to the fixed good $k$-coloring. 
Assuming saturation does not make loss of generality, as  adding these edges does not decrease degrees, keeps the fixed good $k$-coloration, and does not reduce
the diameter, while making the graph more structured for our convenience. 

From the layered and saturated  graph $G$ above, 
we create an 
\emph{unweighted clump graph} $H=H(G)$.  Vertices of $H$ correspond to the clumps of $G$. Two vertices of $H$ are connected by an edge if there were edges between the corresponding clumps in $G$. $H$ is naturally $k$-colored and layered, based on the coloration and layering of $G$.  With a slight abuse of notation, we denote the layers of $H$ by
$L_i$ as well. 
To create a \emph{weighted clump graph}, 
we assign positive integer \emph{weights} to each vertex of the unweighted clump graph.  Blowing up vertices of $H$ into as many copies as their weight is, we obtain a  bigger  $k$-colorable graph
of the same diameter (we do not put edges between successors of the same vertex). In case the weights are the cardinalities of the clumps in $G$, after the blow-up of $H=H(G)$ we get back $G$. The degree of a vertex  $v$ in a blow-up
of $H$, where $v$ is a successor of a vertex $w$ of $H$ by blow-up, is the sum of the weights of neighbors of the vertex $w$ in $H$.   The number of vertices in a blow-up of $H$  is the sum of the weights of all vertices in $H$.

The following theorem was proven in \cite{kcolorable}:  
\begin{theorem}[\cite{kcolorable}]  \label{th:canonical} Assume $k\ge 3$. Let $G'$ be a $k$-colorable connected  graph of order $n$, diameter $D$ and minimum degree at least $\delta$. 
Then there is a saturated $k$-colored 
and layered connected graph $G$ of the same parameters $n$ and $\delta$, with layers $L_0,\ldots,L_D$, for which  the following hold for every
$i$ $(0\le i\le D-1)$:
\begin{enumerate}[label={\upshape (\roman*)}]
\item\label{part:1k} If  $c(i)=1$, then $c(i+1)\le k-1$.
\item\label{part:manycolor} The number of colors used to color the set $L_i\cup L_{i+1}$ is $\min(k,c(i)+c(i+1))$. In particular, when
$c(i)+c(i+1)\le k$, then $L_i$ and $L_{i+1}$ do not share any color.
\item\label{part:k1} If $c(i)=k$, then $i\ge 2$ and  $c(i+1)\ge 2$.
\item\label{part:singleton} If $|L_i|>c(i)$, i.e., $L_i$ contains two vertices of the same color, then $i>0$ and $c(i)+\max\bigl(c(i-1),c(i+1)\bigl)\geq k$.
\end{enumerate}
\end{theorem}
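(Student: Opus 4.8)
\emph{Setup and the two facts that come for free.} The plan is to start from a diametral vertex $x$ (so $\operatorname{ecc}(x)=D$), a good $k$-colouring $\varphi$ of $G'$, and the distance layers $L_0=\{x\},L_1,\dots,L_D$. Saturation — adding every edge between two differently $\varphi$-coloured vertices lying in a common layer or in two consecutive layers — only adds edges, so it preserves $n$, the minimum-degree bound $\delta$, connectivity, the fact that $\varphi$ is a good $k$-colouring, and even the layering: BFS distances from $x$ are unaffected, so $\operatorname{ecc}(x)$ stays $D$ and the diameter stays $D$. After saturation every $G[L_i]$ and every $G[L_i\cup L_{i+1}]$ is complete multipartite, with the clumps (restricted to the relevant layers) as its parts; in particular $c(i)$ depends on $G$ alone, and between two consecutive layers the ``shares a colour with'' relation on clumps is a partial matching (a clump meets at most one clump of the neighbouring layer in a common part). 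Two assertions are now immediate. Every vertex of $L_{i+1}$ has a neighbour in $L_i$, so if $c(i)=1$ with $L_i$ monochromatic of colour $a$, then no vertex of $L_{i+1}$ has colour $a$, whence $c(i+1)\le k-1$: this is \ref{part:1k}. And $c(0)=1$, while $c(1)\le k-1$ because every vertex of $L_1$ is a neighbour of $x$; hence $c(i)=k\Rightarrow i\ge 2$, the first half of \ref{part:k1}.

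\emph{Condition \ref{part:manycolor} via a potential.} The pairs $(G,\varphi)$ reachable from $G'$ by finitely many ``recolour'' and ``saturate'' steps form a finite set (edge sets only grow, and each graph has finitely many colourings). Among them I would take one minimizing $\Phi(G,\varphi):=\sum_{i=0}^{D-1}|\varphi(L_i)\cap\varphi(L_{i+1})|$. Since $|\varphi(L_i)\cup\varphi(L_{i+1})|=c(i)+c(i+1)-|\varphi(L_i)\cap\varphi(L_{i+1})|\le k$, condition \ref{part:manycolor} amounts to each overlap term attaining its forced minimum $\max(0,c(i)+c(i+1)-k)$. If this fails at some $i$, write $A:=\varphi(L_i)$, $B:=\varphi(L_{i+1})$; a short counting argument (it uses $|\{1,\dots,k\}\setminus A|=k-c(i)$ and the inequality $|A\cap B|\ge c(i)+c(i+1)-k$) yields a permutation $\pi$ of the colours with $\pi(b)\in\{b\}\cup(\{1,\dots,k\}\setminus A)$ for every $b\in B$ and $|A\cap\pi(B)|=\max(0,c(i)+c(i+1)-k)$. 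Recolour every vertex of $L_{i+1}\cup\dots\cup L_D$ by $\pi$. Because the edges from $L_i$ to $L_{i+1}$ are exactly the differently $\varphi$-coloured pairs and $\pi$ maps no colour of $B$ to a \emph{different} colour of $A$, the result is again a good $k$-colouring; being a permutation, it splits no colour class, so the only pairs where saturation can now fail lie between $L_i$ and $L_{i+1}$, and re-saturating adds edges only there. Thus no $c(j)$ changes, no overlap term but the $i$-th changes, and that one strictly decreases — contradicting minimality of $\Phi$. I regard this exchange as the clean core of the argument, and it is where I would start writing.

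\emph{The second half of \ref{part:k1} and condition \ref{part:singleton}.} These do not follow from $\Phi$-minimality (which constrains colour overlaps but not clump sizes, nor the value $c(i)=k$), so I would break ties by secondarily minimizing the number of excess vertices $\sum_i(|L_i|-c(i))$ and argue by surgery. For \ref{part:singleton}: if $L_i$ has a clump $B$ with $|B|\ge 2$ while $c(i)+c(i-1)<k$ and $c(i)+c(i+1)<k$, then by \ref{part:manycolor} the colour sets of $L_{i-1},L_i,L_{i+1}$ are pairwise disjoint and $\varphi(L_{i-1}),\varphi(L_{i+1})$ both sit inside the set of $k-c(i)$ colours missing from $L_i$, each using strictly fewer than $k-c(i)$ of them. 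Relabelling the block $L_0\cup\dots\cup L_{i-1}$ (or $L_{i+1}\cup\dots\cup L_D$) by a permutation that leaves $\Phi$ untouched, one may arrange that one of $\varphi(L_{i-1}),\varphi(L_{i+1})$ contains the other; then some colour $f$ is absent from all three layers. Recolouring a single vertex $v\in B$ to $f$ is proper, and after re-saturation $\{v\}$ becomes a singleton clump, $c(i)$ goes up by one, the overlaps (hence $\Phi$) are unchanged, and the excess count drops — a contradiction. The second half of \ref{part:k1} ($c(i)=k\Rightarrow c(i+1)\ge 2$) I would handle in the same style: when $c(i)=k$ the complete multipartite structure forces $L_{i+1}$ to be extremely rigid, and a monochromatic $L_{i+1}$ can be removed or absorbed into a neighbouring layer without changing $n$, $\delta$, $D$ or anything established so far.

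\emph{Where the difficulty sits.} The subtle part is the interplay of the four conditions: each surgery used for \ref{part:singleton} or for the second half of \ref{part:k1} must be checked not to reintroduce a failure of \ref{part:1k}, \ref{part:manycolor}, or the first half of \ref{part:k1}, and one must confirm the whole process terminates. I expect \ref{part:1k}, the first half of \ref{part:k1}, and \ref{part:manycolor} to go through exactly as sketched; the real work is in securing the free colour $f$ for \ref{part:singleton} — which is precisely what forces the preparatory relabelling of a whole block of layers, and whose compatibility with \ref{part:manycolor} then has to be re-verified — and in the surgery that destroys the ``rainbow layer followed by monochromatic layer'' pattern for \ref{part:k1}, which is the step I anticipate being the most technical.
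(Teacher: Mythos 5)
First, a point of reference: the paper you were given does not prove Theorem~\ref{th:canonical} at all --- it is quoted verbatim from \cite{kcolorable}, so there is no in-paper proof to measure you against; your proposal has to stand on its own. On its own terms, most of it does stand. The saturation setup, the observation that \ref{part:1k} and the bound $i\ge 2$ in \ref{part:k1} are automatic for any proper coloring with BFS layers from a diametral vertex, the potential $\Phi=\sum_i|\varphi(L_i)\cap\varphi(L_{i+1})|$ with the permutation exchange on $L_{i+1}\cup\dots\cup L_D$ to force \ref{part:manycolor}, and the secondary minimization of $\sum_i(|L_i|-c(i))$ together with the block relabeling of $L_0\cup\dots\cup L_{i-1}$ to free up a color $f$ for \ref{part:singleton} are all workable: the exchanges keep the coloring proper, keep distances from $x$ (hence $D$), never delete edges (hence $n$, $\delta$, connectivity), and strictly improve the lexicographic objective, so a minimizer satisfies \ref{part:1k}, \ref{part:manycolor}, \ref{part:singleton} and half of \ref{part:k1}.

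The genuine gap is the second half of \ref{part:k1}, i.e.\ $c(i)=k\Rightarrow c(i+1)\ge 2$, which you dispose of in one sentence by saying a monochromatic $L_{i+1}$ ``can be removed or absorbed into a neighbouring layer without changing $n$, $\delta$, $D$.'' That fix cannot work as stated: removing vertices changes $n$; absorbing all of $L_{i+1}$ into $L_i$ or $L_{i+2}$ collapses the distance classes from $x$ and lowers the diameter; and moving even a single vertex across a layer boundary forces you to delete its edges to the previous layer (otherwise its distance from $x$ is unchanged), which threatens the minimum-degree hypothesis for it and for its old back-neighbors. Note also that this case is genuinely immune to your optimization framework: if $c(i)=k$ and $c(i+1)=1$, the overlap $|\varphi(L_i)\cap\varphi(L_{i+1})|=1$ already equals its forced minimum $\max(0,c(i)+c(i+1)-k)$, saturation makes $L_i$ a complete $k$-partite layer so no recoloring of $L_i$ or of $L_{i+1}$ is available, and the excess count is unaffected --- so no $\Phi$- or excess-improving exchange exists, and a new kind of surgery (vertex moves between layers with careful degree bookkeeping, as in \cite{kcolorable}) is required. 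Until that step is supplied and checked not to disturb \ref{part:1k}, \ref{part:manycolor}, \ref{part:singleton}, the proof is incomplete.
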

{\sl Canonical clump graphs} were defined in \cite{kcolorable} as $H=H(G)$ clump graphs, where $G$ satisfies the conclusions of Theorem~\ref{th:canonical}. Now we define
{\sl strongly canonical clump graphs}   for $D\geq 2$ as $H=H(G)$  canonical clump graphs (i.e., $G$ satisfies the conclusions of Theorem~\ref{th:canonical}), and in addition,  
$c(0)=c(D)=1$.

It is not difficult to see  the following: if the graph $G'$ in the assumption of Theorem~\ref{th:canonical} 
 is layered with $|L_0'|=1$ and   $c'(D)=1$,   
 then the proof of Theorem~\ref{th:canonical}     in \cite{kcolorable} provides a layered $G$ with  $|L_0|=1$  (and hence $c(0)=1$),  and $c(D)=1$.
Based on this observation, the following lemma implies that to resolve Conjecture~\ref{con:7o3} (or proving Theorem~\ref{th:main}), we may assume that $G$ has a 
strongly canonical clump graph.

\begin{lemma}\label{lm:cD1} Assume $k\ge 3$ and $D\geq 2$. Let $G'$ be a $k$-colored layered connected  graph of order $n$, diameter $D$, and minimum degree at least $\delta$,
with layers $L'_0,\ldots,L'_D$. 
Then 
 there is a $k$-colored  layered connected graph $G$ of the same parameters, with layers $L_0,\ldots,L_D$, for which $c(0)=c(D)=1$, and for each
$i$ $(0\le i\le D-2)$, we have  $c'(i)=c(i)$ and $L'_i=L_i$.
\end{lemma}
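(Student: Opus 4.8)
The plan is to leave $G'$ almost untouched: I will re-label its last two layers and add a few edges, so that as a graph $G$ is obtained from $G'$ by edge additions alone. Set $V^-:=\bigcup_{i=0}^{D-2}L'_i$. Since every vertex at distance $i\le D-2$ from $x$ has a shortest $x$–path lying inside $V^-$, the induced subgraph $G'[V^-]$ is connected, and the only vertices of $V^-$ adjacent to $L'_{D-1}\cup L'_D$ are those of $L'_{D-2}$. I will not modify $G'[V^-]$, nor the induced subgraph and colouring on $L'_{D-1}\cup L'_D$. Instead I choose a non-empty monochromatic set $Z\subseteq L'_D$, set $L_i:=L'_i$ for $i\le D-2$, $L_{D-1}:=(L'_{D-1}\cup L'_D)\setminus Z$ and $L_D:=Z$, keep every edge of $G'$, and for each $y\in L'_D\setminus Z$ add one new edge joining $y$ to a vertex of $L'_{D-2}$ whose colour differs from that of $y$.

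The choice of $Z$ is dictated by the colouring, and this is the only delicate point. If $L'_{D-2}$ uses at least two colours, then every colour is avoided by some vertex of $L'_{D-2}$, so I may take $Z=\{z^*\}$ for an arbitrary $z^*\in L'_D$, and each remaining vertex of $L'_D$ can then be attached to an appropriately coloured vertex of $L'_{D-2}$. If $L'_{D-2}$ is monochromatic in a colour $\alpha$, then every vertex of $L'_{D-1}$ avoids $\alpha$ (it is adjacent to a vertex of $L'_{D-2}$); I then let $Z$ be the set of all $\alpha$-coloured vertices of $L'_D$ when that set is non-empty, and $Z=\{z^*\}$ for an arbitrary $z^*\in L'_D$ otherwise. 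In either monochromatic subcase every $y\in L'_D\setminus Z$ has colour $\neq\alpha$, hence can be joined to any vertex of $L'_{D-2}$. Thus in every case $Z$ is non-empty and monochromatic, the new edges respect the proper $k$-colouring (so $G$ is properly $k$-coloured on the same palette), $L_{D-1}$ is non-empty, each of its vertices has a neighbour in $L'_{D-2}$ (old vertices of $L'_{D-1}$ keep one, vertices of $L'_D$ promoted to $L_{D-1}$ acquire a new one), and every vertex of $Z=L_D$ has all of its (unchanged) neighbours in $L_{D-1}$, since $Z$ is monochromatic, hence independent.

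It then remains to check the parameters. No edge was deleted, so $E(G)\supseteq E(G')$, whence $\operatorname{diam}(G)\le D$ and $\delta(G)\ge\delta$, and $|V(G)|=n$. For distances, use the potential $f$ defined by $f(u)=\operatorname{dist}_{G'}(x,u)$ on $V^-$, $f\equiv D-1$ on $L_{D-1}$, $f\equiv D$ on $L_D$: the edges of $G$ are exactly the edges inside $V^-$, the (old or new) edges between $L'_{D-2}$ and $L_{D-1}$, and the edges inside $L'_{D-1}\cup L'_D$, and along each of these $f$ changes by at most $1$, so $\operatorname{dist}_G(x,u)\ge f(u)$ for every $u$. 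Combining this with the bounds $\operatorname{dist}_G(x,u)\le\operatorname{dist}_{G'}(x,u)$ for $u\in V^-$ and with the neighbours exhibited above for $u\in L_{D-1}\cup L_D$ shows that $L_0,\dots,L_D$ are precisely the distance layers of $x$ in $G$, that $x$ has eccentricity $D$ in $G$ (so $\operatorname{diam}(G)=D$), and that $G$ is connected (every vertex reaches the connected set $V^-$). Finally $L_i=L'_i$ carries the unchanged colouring for $i\le D-2$, so $c(i)=c'(i)$ there; $L_0=\{x\}$ gives $c(0)=1$; and $Z$ monochromatic gives $c(D)=1$. The one genuine obstacle, then, is to be sure the vertices of $L'_D$ that get promoted to $L_{D-1}$ can each be attached to $L'_{D-2}$ without violating properness — which is exactly why, when $L'_{D-2}$ is monochromatic, one must absorb all of that colour into $L_D$.
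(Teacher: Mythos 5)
Your construction is correct and is essentially the paper's own argument: keep layers $0,\dots,D-2$ fixed, retain a monochromatic subset of $L'_D$ as the new last layer, push the remaining last-layer vertices into layer $D-1$, and attach each of them to a differently coloured vertex of $L'_{D-2}$, with the case distinction on whether $L'_{D-2}$ is monochromatic playing exactly the role of the paper's choice of the colour $A$ (preferring one that appears in $L'_{D-2}$). The only differences are cosmetic — you add a single edge per moved vertex rather than all admissible ones, and you verify the layering/diameter via an explicit potential function — so the proofs match in substance.
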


\begin{proof}
As $|L'_0|=1$ is necessary in a layered graph, we must have $c'(0)=1$, and if $c'(D)=1$, the choice $L'_i=L_i$ suffices. If $c'(D)>1$, pick a color $A$ in $L_D'$. If possible, pick such a color  
that also appears in $L_{D-2}'$. This ensures that 
 for all colors $B$ in $L_D'$ such that $B\ne A$ there is a color $C$ in $L_{D-2}'$ such that $B\ne C$ (where $C=A$, if $A$ appeared in $L_{D-2}$, otherwise any color in $L_{D-2}$ works). Create a layered graph graph $G$ from $G'$ by moving all vertices in $L_{D}'$
that are not colored $A$ to the next-to-last layer, which will be $L_{D-1}$, and connect them to all vertices in $L_{D-2} = L_{D-2}'$ that have different color. Note that 
for all vertices of $ L_{D-1}$,  there is at least one such vertex.
As we only changed the number of vertices in layers $D-1$ and $D$, and did not change the coloration of the vertices, the claim follows.
\end{proof}

\section{Duality}
Let $\mathcal{H}_{k,D,\delta}$   denote the family of unweighted canonical clump graphs of 
diameter $D$ that arises from connected $k$-colorable graphs $G$  with diameter $D$ and minimum degree at least $\delta$, when the order of $G$ is unspecified. 
We will rely on the following result from~\cite{kcolorable}:
\begin{theorem}
\label{th:tool} {\rm  (\cite{kcolorable})}
Fix $k\ge 3$. Assume that there exists constants $\tilde{u}>0$ and $C\ge 0$ such that for all $D$ and $\delta$, and for all $H\in\mathcal{H}_{k,D,\delta}$, the optimum of the linear program
\[
\text{Maximize  } \delta\cdot \sum_{y\in V(H)} u(y), 
\]
subject to the condition
\begin{equation}
\forall x\in V(H) \,\,\,\,\,\,\,\, \sum_{y\in V(H)\, : \, xy\in E(H)} u(y)\le 1. \label{dualcond}
\end{equation}
is at least
\[
\tilde{u}\delta D+C.
\]
Then for any $k$-colorable graphs $G$ with minimum degree $\delta$ on $n$ vertices, we have
\[
\operatorname{diam}(G) \le \frac{1}{\tilde{u}}\frac{n}{\delta}-\frac{C}{\tilde u}.
\]
\end{theorem}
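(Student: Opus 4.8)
The plan is to deduce the statement from weak linear programming duality in one stroke, using Theorem~\ref{th:canonical} to reduce an arbitrary $k$-colorable graph to a canonical clump graph and using the clump cardinalities as an explicit feasible solution of the primal program. So first I would fix a $k$-colorable connected graph $G$ of minimum degree at least $\delta$ on $n$ vertices and put $D=\operatorname{diam}(G)$. Applying Theorem~\ref{th:canonical} with $G'=G$, I obtain a saturated, $k$-colored, layered, connected graph on the same $n$ vertices, with minimum degree at least $\delta$ and with layers $L_0,\dots,L_D$; its clump graph $H=H(G)$ then belongs to $\mathcal H_{k,D,\delta}$. For $x\in V(H)$ let $w(x)$ be the number of vertices in the clump $x$. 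By the definition of the clump graph and of the blow-up (no edges inside a fibre; all edges between two fibres exactly when the corresponding clumps are adjacent), blowing $H$ up with these weights returns the saturated graph; hence $\sum_{x\in V(H)}w(x)=n$, and for every $y\in V(H)$ the common degree in the saturated $G$ of the vertices of the clump $y$ equals $\sum_{x:\,xy\in E(H)}w(x)$, which is at least $\delta$. Thus $w\ge 0$, $\sum_x w(x)=n$, and $\sum_{x:\,xy\in E(H)}w(x)\ge\delta$ for all $y\in V(H)$.

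Next I would combine this $w$ with an arbitrary $u\ge 0$ satisfying \eqref{dualcond} for this $H$. Double counting the products $u(y)w(x)$ over the edges of $H$ gives
\begin{align*}
\delta\sum_{y\in V(H)}u(y)
&\le \sum_{y\in V(H)}u(y)\sum_{x:\,xy\in E(H)} w(x)
= \sum_{x\in V(H)}w(x)\sum_{y:\,xy\in E(H)} u(y)\\
&\le \sum_{x\in V(H)}w(x) = n,
\end{align*}
where the first inequality uses $\sum_{x:\,xy\in E(H)}w(x)\ge\delta$ together with $u(y)\ge 0$, the middle equality is the symmetry of the edge relation, and the last step is \eqref{dualcond} applied to each $x$. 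Consequently, for this particular $H\in\mathcal H_{k,D,\delta}$ the optimum of the linear program in the statement is at most $n$.

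Finally I would invoke the hypothesis for this same $H$, $D$, and $\delta$: the optimum is at least $\tilde u\delta D+C$. Combining the two bounds gives $\tilde u\delta D+C\le n$, and solving for $D$ yields $\operatorname{diam}(G)=D\le \frac1{\tilde u}\frac n\delta-\frac C{\tilde u}$; since $G$ was arbitrary, this is the assertion.

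I do not expect a genuine obstacle here once Theorem~\ref{th:canonical} is available: the mathematical content is entirely the one-line weak-duality estimate displayed above. The points that need a little care are (a) verifying that the clump cardinalities really do furnish a feasible solution of the primal --- that is, that the degree of a blown-up vertex is exactly the sum of the weights of the neighbouring clumps and that the weights sum to $n$ --- which is immediate from the definitions of the weighted clump graph and of saturation, and (b) checking that the reduction of Theorem~\ref{th:canonical} keeps both the order $n$ and the diameter $D$, so that the clump graph it produces indeed lies in $\mathcal H_{k,D,\delta}$ rather than in some $\mathcal H_{k,D',\delta}$ with $D'\ne D$.
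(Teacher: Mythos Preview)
Your argument is correct and is precisely the weak LP duality argument that the paper alludes to in the introduction; note, however, that the present paper does not itself prove Theorem~\ref{th:tool} but merely quotes it from \cite{kcolorable}, so there is no in-paper proof to compare against. One small point: you tacitly assume $u\ge 0$ when you pass from $\delta\sum_y u(y)$ to $\sum_y u(y)\sum_{x:xy\in E(H)}w(x)$, and the theorem as stated here omits the nonnegativity constraint---but the weights constructed in Section~5 are all nonnegative, so the intended LP is indeed over $u\ge 0$ and your reading is the right one.
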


In  Theorem~\ref{th:tool}  and in its proof we may change the family of canonical clump graphs $\mathcal{H}$ to the family of strongly canonical clump graphs $\mathcal{H'}$
keeping all arguments valid.

\section{Some definitions and observations} \label{sec:definitions}
Recall that we use the sloppy notation $L_i$ for the layers of the clump graph $H(G)$ as well, not just for the layers of $G$. Hence $c(i)=|L_i|$, if $L_i$ denotes a layer
of the clump graph. Based on the arguments of Section~\ref{sec:clump}, we have:

\begin{claim}\label{def:strongcan} An unweighted $k$-colorable strongly canonical clump graph with layers $L_0,\ldots,L_D$   satisfies the following properties:
\begin{enumerate}[label={\upshape (\roman*)}]
\item\label{part:ends} $|L_0|=|L_D|=1$,
\item\label{part:nok1} If $|L_i|=k$, then $2\le i \le D-1$ and $\min(|L_{i-1}|,|L_{i+1}|)\ge 2$, and
\item\label{part:match} For $i\in[D]$, the edges that do not appear between $L_{i-1}$ and $L_i$ form a matching of size $\max(k,|L_{i-1}|+|L_i|)-k$.
\end{enumerate}
\end{claim}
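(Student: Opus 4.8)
The plan is to read off all three items directly from the way a strongly canonical clump graph $H=H(G)$ is constructed, where $G$ is saturated, layered, $k$-colored, satisfies the conclusions of Theorem~\ref{th:canonical}, and has $c(0)=c(D)=1$ (the last condition being available by Lemma~\ref{lm:cD1}). The key preliminary observation I would record is the adjacency pattern of $H$ forced by saturation. A clump is the set of all vertices of a single color inside one layer, so distinct clumps of $L_i$ have distinct colors and $|L_i|=c(i)$, consistent with the sloppy notation. Between consecutive layers $L_{i-1}$ and $L_i$, two clumps are joined in $H$ iff $G$ had an edge between them; by saturation this happens exactly when their colors differ, so a pair of clumps is a non-edge of $H$ exactly when the two clumps carry the same color. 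In particular the non-edges between $L_{i-1}$ and $L_i$ are in bijection with the colors appearing in both layers, and since distinct such colors sit on disjoint clumps, these non-edges automatically form a matching. Item \ref{part:ends} is then immediate: $L_0=\{x\}$ is a single clump so $|L_0|=c(0)=1$, and $|L_D|=c(D)=1$ is part of the definition of a strongly canonical clump graph.

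For item \ref{part:nok1}, suppose $|L_i|=c(i)=k$. Theorem~\ref{th:canonical}~\ref{part:k1} gives $i\ge 2$. Since $k\ge 3$ we have $c(D)=1\ne k$, hence $i\ne D$ and so $i\le D-1$; this makes the index $i$ legal in Theorem~\ref{th:canonical}~\ref{part:k1}, which then yields $c(i+1)\ge 2$. If we had $c(i-1)=1$, then Theorem~\ref{th:canonical}~\ref{part:1k} applied at the index $i-1$ (legal, as $1\le i-1\le D-1$ because $2\le i\le D-1$) would force $c(i)\le k-1$, contradicting $c(i)=k$; hence $c(i-1)\ge 2$. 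Thus $\min(|L_{i-1}|,|L_{i+1}|)\ge 2$.

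For item \ref{part:match}, fix $i\in[D]$. By the preliminary observation the non-edges between $L_{i-1}$ and $L_i$ form a matching whose size is the number of colors common to the two layers. By inclusion–exclusion this equals $c(i-1)+c(i)$ minus the number of colors appearing in $L_{i-1}\cup L_i$, and Theorem~\ref{th:canonical}~\ref{part:manycolor} (applied at the legal index $i-1\in\{0,\ldots,D-1\}$) evaluates the latter as $\min\bigl(k,\,c(i-1)+c(i)\bigr)$. Hence the matching has size
\[
c(i-1)+c(i)-\min\bigl(k,\,c(i-1)+c(i)\bigr)=\max\bigl(k,\,c(i-1)+c(i)\bigr)-k=\max\bigl(k,\,|L_{i-1}|+|L_i|\bigr)-k,
\]
as claimed.

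None of this is deep; the work is bookkeeping. The only place needing genuine care is item \ref{part:nok1}: Theorem~\ref{th:canonical}~\ref{part:k1} constrains only the layer \emph{after} a $k$-colored layer, so the bound $c(i-1)\ge 2$ must be obtained separately by invoking part \ref{part:1k} at the shifted index $i-1$, and one must check that every application of a part of Theorem~\ref{th:canonical} respects its index range $0\le\cdot\le D-1$ — which is exactly why establishing $i\le D-1$ from $c(D)=1$ should come first. The other mild subtlety, already isolated in the preliminary observation, is being explicit that saturation turns ``number of colors shared by two consecutive layers'' into ``size of a matching of non-edges.''
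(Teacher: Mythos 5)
Your proposal is correct and matches the paper's intent: the paper states this Claim without a separate proof, as an immediate consequence of the construction in Section~\ref{sec:clump} (saturation, proper $k$-coloring of the clump graph, the added condition $c(0)=c(D)=1$) together with Theorem~\ref{th:canonical}, and your argument simply makes that bookkeeping explicit, including the index-shift uses of parts \ref{part:1k} and \ref{part:manycolor} and the identity $a-\min(k,a)=\max(k,a)-k$. No gaps.
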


For the following definition, and also for the rest of this section, assume that we are given
a $k$-colorable canonical clump graph $H$ with layers $L_0,\ldots ,L_D$. We define for convenience two additional layers, as  $L_{-1}=L_{D+1}=\emptyset$.
For a vertex $x\in V(H)$, let $N(x)$ denote the set of neighbors of $x$.
\begin{definition} For each $i:0\le i\le D$, define the set $S_i=\{x\in L_i: L_{i-1}\cup L_i\subseteq N(x)\}$. We call a
layer $L_i$ \emph{big} if $|S_i|>\frac{k}{2}$. A layer is \emph{small} if it is not big.
\end{definition}

Note that if $L_i$ is big, then $i\notin\{0,D\}$. We set $S_{-1}=S_{D+1}=\emptyset$, in accordance with $L_{-1}=L_{D+1}=\emptyset$.

\begin{lemma}\label{lm:basic} Assume $D\geq 2$. Let $H$ be an unweighted $k$-colorable strongly caonical clump graph with layers $L_0,\ldots,L_D$.
 The following is true for each $i:0\le i\le D$:
\begin{enumerate}[label={\upshape (\roman*)}]
\item\label{part:sizes} $|L_i|\le k-\max(|S_{i-1}|,|S_{i+1}|)$,
\item\label{part:ub} $|S_i|\le k-1$,
\item\label{part:type} if $L_i$ is big, then $1\le i\le D-1$ and $L_{i-1},L_{i+1}$ are small,
\item\label{part:small} if $|L_i|=1$, then $L_i=S_i$,
\item\label{part:type3} $\max(|L_i\setminus S_i|,|L_{i+1}\setminus S_{i+1}|)\le k-|S_i|-|S_{i+1}|$,
\item\label{part:big}  if $|S_i|=k-1$, then $L_i=S_i$ and for $j=i\pm 1$, $|L_j|=|S_j|=1$,
\item\label{part:last} if $k\in\{3,4\}$ and $L_i$ is big, then $|S_i|=k-1$.
\end{enumerate}
\end{lemma}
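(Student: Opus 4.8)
The plan is to reduce the whole lemma to a single structural observation about the sets $S_i$, after which all seven statements follow by elementary counting.

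\emph{The key fact.} Since $H=H(G)$ is a clump graph, the clumps inside a layer $L_j$ carry pairwise distinct colours, so the number of colours used on $L_j$ equals $|L_j|$; write $C_j$ for this colour set and put $C_{-1}=C_{D+1}=\emptyset$. By saturation, two clumps lying in the same layer or in consecutive layers are adjacent in $H$ exactly when they are differently coloured (two equally coloured clumps are never adjacent, since the fixed $k$-colouring is proper). Hence a clump $x\in L_i$ belongs to $S_i$ precisely when $x$ is adjacent in $H$ to every clump of $L_{i-1}$ and of $L_{i+1}$, i.e.\ precisely when the colour of $x$ avoids $C_{i-1}\cup C_{i+1}$. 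Since $S_i\subseteq L_i$ carries $|S_i|$ distinct colours, all outside $C_{i-1}$ and all outside $C_{i+1}$, and $|C_j|=|L_j|$, we get
\[
|S_i|+|L_{i-1}|\le k \quad\text{and}\quad |S_i|+|L_{i+1}|\le k \qquad (0\le i\le D),
\]
which I shall call $(\ast)$. From Claim~\ref{def:strongcan} I will use only that $|L_0|=|L_D|=1$ and that $|L_j|=k$ forces $\min(|L_{j-1}|,|L_{j+1}|)\ge 2$.

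\emph{Deriving the parts.} I would establish part (iv) first: if $|L_i|=1$ and its unique colour lay in $C_{i-1}$, then by Theorem~\ref{th:canonical}(ii) $0<|C_{i-1}\cap C_i|=\max(0,|L_{i-1}|+1-k)$, forcing $|L_{i-1}|=k$ and hence $|L_i|\ge 2$, a contradiction; the symmetric argument excludes $C_{i+1}$ (the conventions $C_{-1}=C_{D+1}=\emptyset$ handle $i\in\{0,D\}$), so $L_i=S_i$, and in particular $|S_0|=|S_D|=1$. Given (iv), the rest is bookkeeping on top of $(\ast)$ and $S_i\subseteq L_i$: part (i) is just $(\ast)$ read at the indices $i-1$ and $i+1$; for (ii), $|S_i|\le k-|L_{i-1}|\le k-1$ when $i\ge1$, while $|S_0|\le|L_0|=1\le k-1$; (v) is again $(\ast)$, since $|L_i\setminus S_i|=|L_i|-|S_i|\le k-|S_i|-|S_{i+1}|$ and symmetrically for $|L_{i+1}\setminus S_{i+1}|$; and (iii) follows because $(\ast)$ together with $|S_{i+1}|\le|L_{i+1}|$ gives $|S_i|+|S_{i+1}|\le k$ and likewise $|S_{i-1}|+|S_i|\le k$, so (using $k\ge 3$) $L_0,L_D$ are small, whence a big $L_i$ has $1\le i\le D-1$ and no big neighbour. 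For (vi): if $|S_i|=k-1$ then $(\ast)$ gives $|L_{i-1}|,|L_{i+1}|\le 1$, while $|S_i|=k-1\ge 2$ forces $i\notin\{0,D\}$, so $|L_{i-1}|=|L_{i+1}|=1$ and then $|S_{i-1}|=|S_{i+1}|=1$ by (iv); also $|L_i|=k$ would force $|L_{i-1}|\ge 2$, so $|L_i|=k-1=|S_i|$, i.e.\ $L_i=S_i$. Finally (vii): for $k\in\{3,4\}$, $|S_i|>k/2$ forces $|S_i|\ge k-1$, and (ii) gives $|S_i|\le k-1$, so $|S_i|=k-1$.

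\emph{Where the difficulty lies.} No step is deep. The only place demanding care is the key fact itself — recognising that membership of a clump in $S_i$ is a statement purely about the clump's colour and the colour sets of the two neighbouring layers — together with the treatment of the boundary layers $L_0,L_D$, where $(\ast)$ becomes useful only once one knows $|L_0|=|L_D|=1$. Part (iv) is the single point where two of the canonical-clump properties genuinely interact (Theorem~\ref{th:canonical}(ii) together with the fact that $|L_j|=k$ forbids a singleton neighbour); everything after it is arithmetic.
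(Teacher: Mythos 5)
Your proof is correct and takes essentially the same route as the paper: your inequality $(\ast)$ is exactly part \ref{part:sizes} (the paper obtains it from the cliques $S_{i\pm 1}\cup L_i$, you from counting colours, which is the same bound under saturation), and parts \ref{part:ub}, \ref{part:type}, \ref{part:type3}, \ref{part:big}, \ref{part:last} then follow by the same bookkeeping. The only cosmetic difference is in part \ref{part:small}, where you use Theorem~\ref{th:canonical}~\ref{part:manycolor} together with Claim~\ref{def:strongcan}~\ref{part:nok1} instead of the paper's appeal to the matching property Claim~\ref{def:strongcan}~\ref{part:match}; these are equivalent formulations.
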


\begin{proof}
\ref{part:sizes} follows from the facts that $S_{i-1}\cup L_i$, and also  $S_{i+1}\cup L_i$, forms a complete subgraph in $k$-colorable graph $H$. 

\ref{part:ub} follows from \ref{part:sizes} and the fact that $L_{i-1}\cup L_{i+1}$ contains at least one vertex.

\ref{part:type} follows from \ref{part:sizes}.

\ref{part:small} :
As $|L_i|=1$, then from Claim~\ref{def:strongcan}~\ref{part:nok1} we get that $\max(|L_{i-1}|,|L_{i+1}|)\le k-1$. By Claim~\ref{def:strongcan}~\ref{part:match},  the vertex in $L_i$ is adjacent to every vertex in $L_{i-1}\cup L_{i+1}$. 

For \ref{part:type3}, 
$S_{i}\cup S_{i+1}\cup (L_i\setminus S_i)$ forms a complete graph in the $k$-colorable graph $H$, and hence $|L_i\setminus S_i|\leq k-|S_i|-|S_{i+1}|$, and similarly,
$S_{i}\cup S_{i+1}\cup (L_{i+1}\setminus S_{i+1})$ forms a complete graph, and hence  $|L_{i+1}\setminus S_{i+1}|\leq k-|S_i|-|S_{i+1}|$.

For  \ref{part:big},
if $|S_i|=k-1$, then $1\leq i\leq D-1$. By~\ref{part:sizes}, $|L_{i-1}|=|L_{i+1}|=1$, and by Claim~\ref{def:strongcan}~\ref{part:nok1},  $|L_i|\le k-1$, and by $k-1=|S_i|\leq |L_i|\le k-1$, $S_i=L_i$.

For \ref{part:last},
if $L_i$ is big, then by definition $\frac{k}{2}<|S_i|$. By \ref{part:ub} $|S_i|\le k-1$. For $k\in\{3,4\}$, these give $|S_i|=k-1$.
\end{proof}

\begin{definition} Let $H$ be an unweighted $k$-colorable strongly canonical clump graph with layers $L_0,\ldots,L_D$. If for some  $s\geq 1$  the contiguous segment of layers
$L_i,L_{i+1},\ldots,L_{i+2s}$ satisfies all the following conditions:
\begin{enumerate}[label={\upshape (\roman*)}]
\item for each $j:1\le j\le s$  the layer $L_{i+2j-1}$ is big (thus, $L_{i+2j-2},L_{i+2j}$ are small),
\item $i= 0$ or $L_{i-1}$ is small,
\item $i+2s= D$ or $L_{i+2s+1}$ is small,
\end{enumerate}
then we say that the contiguous segment is Type 1, if $s=1$, and Type 2, if $s>1$.
\end{definition}

\begin{definition} Let $H$ be an unweighted $k$-colorable strongly canonical clump graph with layers $L_0,\ldots,L_D$. Assume that $t\geq 0$. We say that the contiguous segment of layers
$L_i,L_{i+1},\ldots,L_{i+t}$ is Type 3, if the following hold:
\begin{enumerate}[label={\upshape (\roman*)}]
\item for each $j:i\le j\le i+t$ the layer $L_{j}$ is small,
\item if $i\ne 0$ then $i> 2$ and $L_{i-2}$ is big (thus, $L_{i-1},L_{i-3}$ are small),
\item if $i+t\ne D$ then $i+t<D-2$ and $L_{i+t+2}$ is big (thus, $L_{i+t+1},L_{i+t+3}$ are small).
\end{enumerate}
\end{definition}
Observe that in a Type 3 segment every layer is small.

The following Lemma easily follows from the definition of strongly canonical clump graphs and Lemma~\ref{lm:basic}.
\begin{lemma}\label{lm:main}
Let $H$ be an unweighted $k$-colorable strongly canonical clump graph. Then the layers $L_{0},\ldots,L_D$ can be partitioned into segments of Type 1, Type 2 and Type 3. Moreover, 
if $k\in\{3,4\}$ and $L_j$ is a layer in a Type 1 or Type 2 segment, then $L_j=S_j$ and $|L_j|\in\{1,k-1\}$.
\end{lemma}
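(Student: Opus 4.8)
The plan is to prove the two assertions separately. The partition statement is essentially combinatorial bookkeeping whose only structural input is Lemma~\ref{lm:basic}~\ref{part:type}: every big layer is interior (index in $\{1,\ldots,D-1\}$) and both of its neighbouring layers are small. The ``moreover'' clause, which is where the hypothesis $k\in\{3,4\}$ is used, follows quickly from parts~\ref{part:last} and~\ref{part:big} of Lemma~\ref{lm:basic}. I expect the main obstacle to be purely organizational: making the partition argument airtight requires careful treatment of the boundary cases---a big layer at position $1$ or $D-1$, a segment abutting an end $L_0$ or $L_D$, and two Type~1/2 segments lying directly next to each other with no Type~3 segment in between (which turns out to be allowed).

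For the partition I would argue as follows. If $H$ has no big layer, then $L_0,\ldots,L_D$ is a single Type~3 segment. Otherwise, list the big layers as $p_1<\cdots<p_m$; by Lemma~\ref{lm:basic}~\ref{part:type} each $p_a\in\{1,\ldots,D-1\}$ and no two big layers are consecutive. Partition $\{p_1,\ldots,p_m\}$ into its maximal sub-runs in which successive elements differ by exactly $2$, and to a run $\{q,q+2,\ldots,q+2(s-1)\}$ of length $s$ assign the segment $L_{q-1},L_q,\ldots,L_{q+2s-1}$; I claim this is a Type~1 segment if $s=1$ and a Type~2 segment if $s\ge 2$. Condition~(i) of the definition holds because the big layers with index in $[q-1,q+2s-1]$ are exactly the members of the run, and their neighbours are small by Lemma~\ref{lm:basic}~\ref{part:type}. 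Conditions~(ii) and~(iii) hold because $L_{q-2}$ (resp.\ $L_{q+2s}$) cannot be big---otherwise it would prolong the run, contradicting maximality---hence it is small, except when that index leaves $\{0,\ldots,D\}$, in which case $q-1=0$ (resp.\ $(q-1)+2s=D$) and the alternative clause in the definition applies. Every layer not contained in one of these segments is small (the big layers are exactly the $p_a$), and such layers occur in contiguous blocks: one before $p_1$, one after $p_m$, and one between each pair of consecutive runs. Each nonempty such block is readily seen to be a Type~3 segment, the only nontrivial points again being that the big layer two positions before (resp.\ after) the block really is big (it is the last member of the preceding run, resp.\ the first member of the following run) and that the single intervening layer is small by Lemma~\ref{lm:basic}~\ref{part:type}. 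A short final check confirms that the Type~1, Type~2, and Type~3 pieces produced are pairwise disjoint and together cover $L_0,\ldots,L_D$.

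For the ``moreover'' clause, fix $k\in\{3,4\}$ and a layer $L_j$ lying in a Type~1 or Type~2 segment $L_i,L_{i+1},\ldots,L_{i+2s}$; the layers of this segment are the big ones $L_{i+1},L_{i+3},\ldots,L_{i+2s-1}$ together with the small ones $L_i,L_{i+2},\ldots,L_{i+2s}$. If $L_j$ is big, then Lemma~\ref{lm:basic}~\ref{part:last} gives $|S_j|=k-1$, and then Lemma~\ref{lm:basic}~\ref{part:big} yields $L_j=S_j$ and $|L_j|=k-1$, and in addition $|L_{j-1}|=|S_{j-1}|=|L_{j+1}|=|S_{j+1}|=1$. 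If $L_j$ is small, then, since $s\ge 1$, it is immediately adjacent (in the layer sequence) to some big layer $L_{j'}$ of the segment---for $L_i$ take $j'=i+1$, for $L_{i+2s}$ take $j'=i+2s-1$, and for an interior small layer either neighbour works---so the big-layer case applied at $j'$ already forces $|L_j|=|S_j|=1$. In either case $L_j=S_j$ and $|L_j|\in\{1,k-1\}$, which completes the argument.
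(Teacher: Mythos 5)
Your proposal is correct and takes the route the paper intends: the paper offers no written argument beyond saying the lemma ``easily follows from the definition of strongly canonical clump graphs and Lemma~\ref{lm:basic}'', and your write-up supplies exactly those details --- grouping the big layers into maximal runs at distance $2$ and using Lemma~\ref{lm:basic}~\ref{part:type} to verify the segment definitions, then using parts~\ref{part:last} and~\ref{part:big} for the moreover clause. The only detail you leave implicit (that $i>2$, resp.\ $i+t<D-2$, in the Type 3 conditions) is immediate since big layers have index in $\{1,\dots,D-1\}$, so there is no gap.
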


\section{Proof of Theorem~\ref{th:main}}

Assume $k\in\{3,4\}$, and let $H$ be an unweighted $k$-colorable strongly canonical clump graph. By Theorem~\ref{th:tool}, it is enough to find a dual weighting $u$ of the vertices of $H$, which satisfies the conditions of that theorem and has total weight 
$(D+1)\frac{k}{3k-2}$. Fix a partition of the layers of $H$ into segments of Type 1, Type 2 and Type 3 according to Lemma~\ref{lm:main}.
 For shortness, we will say that layer $L_i$ is of Type $j$,  if $L_i$ falls into a segment  of Type $j$.

Consider a vertex $v$ in a layer $L_i$. Set $u(v)$ as follows:
\begin{itemize}
\item If $L_i$ is of Type 1:
$u(v)=\begin{cases}  \frac{2}{3k-2}, & {\rm if }  |L_i|=k-1, \\  \frac{k+2}{2(3k-2)}  & {\rm otherwise}. \end{cases}$
\item
If $L_i$ is of Type 2:\\
$u(v)=\begin{cases} \frac{1}{2(k-1)} ,& {\rm if\ } |L_i|=k-1,\\
\frac{k+2}{2(3k-2)}, & {\rm if\ } |L_i|=1 \hbox{ and } L_i  \hbox{\rm \  is not the first or last layer of the segment,}\\
\frac{3k+2}{4(3k-2)}, & \hbox{otherwise}.
\end{cases}$\\
Note that as $k\ge 3$, $\frac{k+2}{2(3k-2)}<\frac{3k+2}{4(3k-2)}<\frac{k}{3k-2}$.
\item
If $L_i$ is of Type 3:\\
$u(v)=\begin{cases} 
\frac{k}{(3k-2)|L_i|}, &{\rm if\ } |L_i|\le\frac{k}{2}\\
\frac{2}{3k-2}  ,& {\rm if\ } |L_i|>\frac{k}{2} {\rm \ and\ } v\in S_i\\
\frac{k-2|S_i|}{(3k-2)(|L_i|-|S_i|)} , & \hbox{otherwise}.\end{cases}$\\
 Note that as $|S_i|\le\frac{k}{2}$, we get  $u(v)\ge 0$. Also, $u(v)\geq \frac{2}{3k-2}$ if $|L_i|\le\frac{k}{2}$ or $v\in S_i$. 
\end{itemize}

We define the weight $u(X)$ of a vertex set $X$ as $\sum_{v\in X} u(v)$.
It is easy to check that for any Type 3 layer $L_i$, $u(L_i)=\frac{k}{3k-2}$. Also, first and last layers in a segment of any type have weight at most $\frac{k}{3k-2}$.

If $L_i,L_{i+1},L_{i+2}$ is a Type 1 segment, then $u(L_i)+u( L_{i+1})+u(L_{i+2})=(k-1)\cdot \frac{2}{3k-2}+2\cdot\frac{k+2}{2(3k-2)}=3\cdot\frac{k}{3k-2}$.

Assume that $L_i,L_{i+1},\ldots,L_{i+2s}$ is a Type 2 segment. The total weight of the  layers of this segment is
$
s\cdot \frac{1}{2}+ (s-1)\cdot \frac{k+2}{2(3k-2)}+2\cdot \frac{3k+2}{4(3k-2)}=(2s+1)\cdot\frac{k}{3k-2}
$.

This gives that the total weight of $H$ is $(D+1)\frac{k}{3k-2}$, as required. We need to check condition (\ref{dualcond}) at every vertex $v\in V(H)$.

Assume first that $v\in L_i$, where $L_i$ is of Type 1.

If $L_i$ is big, then $u(N(v))=(k-2) \cdot \frac{2}{3k-2}+2\cdot \frac{k+2}{2(3k-2)}=1$.
If $L_i$ is small, then it is the first or last layer in its segment.  As first and last layers in a segment of any type have weight at most $\frac{k}{3k-2}$,  we have $u(N(v))\le\frac{k}{3k-2}+(k-1)\cdot \frac{2}{3k-2}=1$.

Assume next that $v\in L_i$, where $L_i$ is  of Type 2. 
If $L_i$ is  small, and is not the first or last layer in the segment, then $u(N(v))= 2\cdot(k-1)\cdot \frac{1}{2k-1} =1$. If $L_i$ is the first or last layer in the segment, then
$u(N(v))\le  (k-1)\cdot \frac{1}{2(k-1)}+\frac{k}{3k-2}<1$. If $L_i$ is big, then $u(N(v))$ is the greatest if $L_i$ is the second or next-to-last layer in its segment. Therefore
\begin{eqnarray*}
u(N(v))&\le& \Biggl(\frac{1}{2}-\frac{1}{2(k-1)}\Biggl) +\frac{k+2}{2(3k-2)}+ \frac{3k+2}{4(3k-2)}=\frac{(11k+2)(k-1)-6k+4}{4(k-1)(3k-2)}\\
&=&\frac{(11k-4)(k-1)-2}{4(3k-2)(k-1)}\le 1.
\end{eqnarray*}

Assume that $v\in L_i$,   where $L_i$ is  of Type 3. Then $\max(u(L_i),u(L_{i-1}),u(L_{i+1}))\le\frac{k}{3k-2}$. If $u(v)\ge\frac{2}{3k-2}$, then $u(N(v))\le u(L_{i-1})+u( L_{i})+u( L_{i+1})-u(v)\le\frac{3k}{3k-2}-\frac{2}{3k-2}=1$.
Otherwise, we have that $|L_i|>\frac{k}{2}$ and $v\notin S_i$. Since $v\notin S_i$, there is a $j\in\{i-1,i+1\}$ and a $w\in L_j$ that is not a neighbor of $v$. 
As $w\in L_j\setminus S_j$, by Lemma~\ref{lm:basic}~\ref{part:small} we have that $|L_j|>1$, therefore $L_j$ is also of Type 3.

It  $u(v)+u(w)\ge\frac{2}{3k-2}$, then we get, as before, that
$u(N(v))\le 1$, as needed. In particular, if $u(w)\ge\frac{2}{3k-2}$ then we are done. So we may further assume that $|L_j|>\frac{k}{2}$ and $w\notin S_j$, 
Moreover, from Lemma~\ref{lm:basic}\ref{part:type3} we have $\max(|L_i|-|S_i|,|L_j|-|S_j|)\le k- |S_i|-|S_j|$. Therefore
\begin{eqnarray*}
u(v)+u(w)&=&\frac{k-2|S_i|}{(3k-2)(|L_i|-|S_i|)}+\frac{k-2|S_j|}{(3k-2)(|L_j|-|S_j|)}\\
&\ge&\frac{k-2|S_i|}{(3k-2)(k-|S_i|-|S_j|)}+\frac{k-2|S_j|}{(3k-2)(k-|S_i|-|S_j|)}
=\frac{2}{3k-2}.
\end{eqnarray*}
This finishes the proof.

\end{document}